\newcommand{\N}{\mathcal{N}_1}
\newcommand{\NE}{\operatorname{NE}}
\newcommand{\Exc}{\operatorname{Exc}}
\newcommand{\Locus}{\operatorname{Locus}}
\newcommand{\codim}{\operatorname{codim}}
\newcommand{\reg}{\operatorname{reg}}
\newtheorem{thm}{Theorem}[section]
\newtheorem{lem}[thm]{Lemma}
\newtheorem{pro}[thm]{Proposition}
\theoremstyle{definition}
\numberwithin{equation}{section}
\begin{document}
\title{A note on the Picard number of singular Fano $3$-folds}
\author{Gloria Della Noce}
\address{Dipartimento di Matematica \\ Universit\`a  di Pavia \\ via Ferrata, 1 27100 Pavia - Italy}
\email{gloria.dellanoce@unipv.it}

\thanks{
This work has been partially supported by PRIN 2009 \mbox{\textquotedblleft \textit{Moduli, strutture geometriche e loro applicazioni}\textquotedblright}.}


\begin{abstract}

\bigskip

\noindent Using a construction due to C. Casagrande and further developed by the author in \cite{gloria}, we prove that the Picard number of a non-smooth Fano $3$-fold with isolated factorial canonical singularities, is at most $6$.

\end{abstract}

\maketitle

\bibliographystyle{amsalpha}

\section*{Introduction}

Let $X$ be a Fano $3$-fold. If $X$ is smooth, we know from the classifiction results in \cite{mm}, that its Picard number $\rho_X$ is at most $10$. Moreover, if $\rho_X \geq 6$, then $X$ is isomorphic to a product $S \times \mathbb{P}^1$, where $S$ is a smooth Del Pezzo surface.

If $X$ is singular, bounds for $\rho_X$ are known only in particular cases. If $X$ is toric and has canonical singularities, then $\rho_X \leq 5$ (\cite{bat3} and \cite{ww}). If $X$ has Gorenstein terminal singularities, then $\rho_X \leq 10$, because $X$ has a smoothing which preserves $\rho_X$ (see \cite[Thorem 11]{nam} and \cite[Theorem 1.4]{jr}). If, instead, $X$ has Gorenstein canonical singularities, it does not admit, in general, a smooth deformation (see \cite[Example 1.4]{prok} for an example). In this setting, the following holds.

\begin{thm}\cite[Theorem 1.3]{gloria}\label{dim3} Let $X$ be a $3$-dimensional $\mathbb{Q}$-factorial Gorenstein Fano variety with isolated canonical singularities. Then $\rho_X \leq 10$.
\end{thm}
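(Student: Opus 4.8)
The plan is to reduce the statement to the case of Gorenstein \emph{terminal} Fano threefolds, which is the one recalled in the introduction. The point is singularity-theoretic: for a Gorenstein threefold a canonical singularity is exactly a compound Du Val point, and an \emph{isolated} compound Du Val point is terminal (Reid's characterisation of Gorenstein terminal threefold singularities). Hence the hypotheses ``$X$ Gorenstein, canonical, with finite singular locus'' already force $X$ to have (isolated) terminal singularities, and it is enough to treat the Gorenstein terminal case.

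So assume $X$ is a Gorenstein terminal Fano threefold. By Namikawa's theorem $X$ is smoothable: there is a flat projective family $\mathcal{X}\to\Delta$ over a disc with $\mathcal{X}_0\cong X$ and $\mathcal{X}_t$ smooth for $t\neq 0$; since $-K_X$ is ample and ampleness is an open condition, after shrinking $\Delta$ every $\mathcal{X}_t$ ($t\neq0$) is a smooth Fano threefold, so $\rho_{\mathcal{X}_t}\leq 10$ by the Mori--Mukai classification \cite{mm}. It then suffices to show $\rho_X\leq\rho_{\mathcal{X}_t}$. The singular points of $X$, being isolated cDV points, are in particular isolated hypersurface singularities of a threefold, so each of them has Milnor fibre with reduced cohomology concentrated in degree $3$; consequently the smoothing affects $b_3$ but not $b_2$, that is $b_2(X)=b_2(\mathcal{X}_t)$ (this invariance is exactly what is proved in the quoted results of Namikawa and Jahnke--Radloff, via the Clemens--Schmid sequence, or a Mayer--Vietoris argument with the Milnor fibres). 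Finally $h^1(\mathcal{O}_X)=h^2(\mathcal{O}_X)=0$ by Kawamata--Viehweg vanishing (as $X$ is Fano with rational singularities), so $c_1$ embeds $\operatorname{Pic}(X)$ into $H^2(X,\mathbb{Z})$ and $\rho_X\leq b_2(X)$, whereas $\mathcal{X}_t$, being a smooth Fano threefold, has $\rho_{\mathcal{X}_t}=b_2(\mathcal{X}_t)$; hence $\rho_X\leq b_2(X)=b_2(\mathcal{X}_t)=\rho_{\mathcal{X}_t}\leq 10$. (The $\mathbb{Q}$-factoriality of $X$ is needed only if one additionally wants the equality $\rho_X=\rho_{\mathcal{X}_t}$, not for the bound.)

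The delicate step is the invariance $b_2(X)=b_2(\mathcal{X}_t)$, which depends on the local analysis of cDV points and on the existence of the global smoothing with smooth Fano general fibre --- precisely the inputs of \cite{nam} and \cite{jr}. An alternative route, which dispenses with the smoothing altogether and is the one that adapts to yield the sharper bounds of the present note, would be: take a general $S\in|-K_X|$, which by Reid--Shokurov is a K3 surface with at worst Du Val singularities; pass to its minimal resolution $\hat S$, a smooth K3 surface, note that the restriction $\operatorname{Pic}(X)_{\mathbb{Q}}\hookrightarrow\operatorname{Pic}(\hat S)_{\mathbb{Q}}$ is injective (which already gives the crude bound $\rho_X\leq20$), and then sharpen to $\rho_X\leq10$ by analysing the extremal rays of $\overline{\NE}(X)$ and their traces on $S$ via Casagrande's construction. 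In that approach the genuinely hard points are the contractions of fibre type and the finitely many singular points of $X$ lying on $S$.
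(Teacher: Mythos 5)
Your opening reduction --- that the hypotheses already force $X$ to be terminal --- is false, and everything else hinges on it. Reid's characterisation states that a Gorenstein threefold point is terminal if and only if it is an \emph{isolated cDV point}; it does not state that every isolated Gorenstein canonical threefold point is cDV. The canonical class is strictly larger: the vertex of the cone over an anticanonically embedded del Pezzo surface is an isolated Gorenstein singularity whose blow-up is crepant (the exceptional divisor has discrepancy $0$), so it is canonical but not terminal, hence not cDV by the very theorem you invoke. Such points do occur on Fano threefolds satisfying the hypotheses, and --- as the introduction of this paper stresses --- a Gorenstein \emph{canonical} Fano threefold need not admit any smoothing at all (see \cite[Example 1.4]{prok}). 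So the deformation argument via \cite{nam} and \cite{jr} is simply not available here; that is precisely why the canonical case is stated as a separate theorem.

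The actual proof of the quoted statement, given in \cite[Theorem 1.3]{gloria} (this paper only cites it), proceeds quite differently: it uses Casagrande's construction from \cite{cas12} together with the fact that Fano varieties are Mori dream spaces \cite{bchm} to produce extremal rays of type $(2,1)$, run a Mori program for a suitable divisor, and bound $\rho_X$ through the resulting elementary contractions --- this is the machinery summarised in Lemma~\ref{lemDN}. Your closing ``alternative route'' gestures towards these ideas, but as written it only delivers the crude bound $\rho_X\leq 20$ from the restriction to a general elephant and defers the improvement to $10$ to an unspecified analysis of extremal rays, so it does not constitute a proof either. As it stands, the proposal has a genuine gap.
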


The proof of this theorem uses a construction introduced by C. Casagrande in \cite{cas12}, and relies on the result of \cite{bchm} that Fano varieties are \emph{Mori dream spaces} (see \cite{hk} for the definition).

In this paper, using the same construction, we show that the bound given by Theorem \ref{dim3} can be improved if $X$ is actually singular and its singularities are also factorial. Our result is the following.

\begin{thm}\label{3fac}
Let $X$ be a non-smooth factorial Fano $3$-fold with isolated canonical singularities. Then $\rho_X \leq 6$.
\end{thm}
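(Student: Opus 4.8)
The plan is to assume $\rho_X \geq 7$ and derive a contradiction. Since the singularities of $X$ are factorial and canonical, $K_X$ is Cartier, so $X$ is a $\mathbb{Q}$-factorial Gorenstein Fano $3$-fold with isolated canonical singularities, and Theorem \ref{dim3} already gives $\rho_X \leq 10$. The whole content of the statement is therefore to use the two extra hypotheses --- that $X$ is singular, and genuinely factorial --- to exclude $\rho_X \in \{7,8,9,10\}$. Fix a singular point $p \in X$. As $X$ is Fano it is a Mori dream space by \cite{bchm} and \cite{hk}, and so is every $\mathbb{Q}$-factorial variety obtained from it by steps of the MMP; I will use this throughout.

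First I would apply the construction of \cite{cas12}, in the form developed in \cite{gloria}, to a prime divisor lying over $p$. Concretely: extract a divisor $E$ over the singular point, obtaining a $\mathbb{Q}$-factorial projective morphism $\sigma\colon \widehat X \to X$ with $\Exc(\sigma)=E$ and $\sigma(E)=p$, and then run the $E$-MMP, i.e.\ the sequence of flips and divisorial contractions negative with respect to the successive transforms of $E$; since $\widehat X$ is a Mori dream space this process is finite and stops either when the transform of $E$ becomes nef or when $E$ itself is contracted. Along this sequence I would keep track simultaneously of the Picard number, of the strict transform of $E$, and of the dimension and type of the singular loci of the intermediate models, using that divisorial contractions and flips of $K$-negative extremal rays preserve the $\mathbb{Q}$-factorial canonical conditions. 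The construction should yield an inequality of the shape $\rho_X \leq \rho_E + c$ with an explicit small constant $c$, together with structural information on the model where the $E$-MMP stops (a Mori fibre space: a conic bundle, a del Pezzo fibration over $\mathbb{P}^1$, or a product).

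Next I would bound $\rho_E$. The surface $E$ is extracted over a single point of a Gorenstein canonical $3$-fold, and by choosing the extraction economically --- with the classification of Gorenstein canonical, and (in the case the argument reduces to that) of terminal, i.e.\ $cDV$, $3$-fold singularities --- one forces $E$ to be a rational surface of small Picard number. Together with the previous step this should give $\rho_X \leq 6$ outside a short list of configurations carrying a conic-bundle or product structure, and disposing of these is, I expect, the main obstacle. Here one combines the bound $\rho_X \leq 10$ of Theorem \ref{dim3} with the Mori--Mukai classification \cite{mm} --- a smooth Fano $3$-fold of Picard number at least $6$ is a product $S \times \mathbb{P}^1$ --- and, in the terminal reduction, with the smoothing results of \cite{nam} and \cite{jr}: the remaining high-Picard-number possibilities are all pushed toward smooth products $S \times \mathbb{P}^1$, which are incompatible with the presence of the singular point $p$, and more precisely with the genuine factoriality of $X$ and of the extremal contractions met along the way. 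This contradiction would close the argument.
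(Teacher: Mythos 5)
Your opening reduction is fine ($X$ factorial and canonical forces $K_X$ Cartier, so Theorem \ref{dim3} applies and only $\rho_X\in\{7,8,9,10\}$ must be excluded), but from there the proposal diverges from anything that works. The construction of \cite{cas12} as used in \cite{gloria} is \emph{not} applied to a divisor extracted over the singular point $p$: it is applied to a prime divisor already sitting inside $X$, namely $E_0=\Locus(R_0)$ for an extremal ray $R_0$ of type $(2,1)$, and one runs a Mori program on $X$ itself tracking $\codim\N((E_0)_i,X_i)$. Its output (Lemma \ref{lemDN}) is not an inequality of the form $\rho_X\le\rho_E+c$ but a \emph{finite morphism} $\pi=(\xi,\psi)\colon X\to S\times\mathbb{P}^1$ with $\rho_S=\rho_X-1$, together with pairwise disjoint divisors $E_1,\dots,E_s$, $s\ge\rho_X-3$, swept by type $(2,1)$ rays. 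Your plan to bound $\rho_E$ for a divisor extracted over a canonical point via a classification of Gorenstein canonical singularities is not viable: such exceptional divisors can have large Picard number (e.g.\ over cones on del Pezzo surfaces), and no such classification step appears in, or is needed for, the argument. Likewise your proposed endgame via \cite{mm}, \cite{nam} and \cite{jr} cannot be made to work here: the paper itself points out that Gorenstein \emph{canonical} Fano $3$-folds need not admit smoothings, and the intermediate models of the MMP are neither smooth nor Fano, so neither the Mori--Mukai classification nor the terminal smoothing results apply to them.

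The idea you are missing is the degree computation for $\pi$. One shows each $E_i\cong\mathbb{P}^1\times\mathbb{P}^1$ (normality via Serre's criterion plus Zariski's Main Theorem applied to $E_i\to T_i\times G_i$), deduces that $K_X-\xi^*K_S$ is trivial on the fibers $f_i$, hence on the general fiber $F$ of $\psi$, which is a smooth del Pezzo surface with $K_F=\zeta^*K_S$ for $\zeta=\xi_{|F}$ finite of degree $d=\deg(\pi)$. Since $\sigma_{|F}$ exhibits $F$ as a blow-up of $\tilde F$ in $s\cdot d$ distinct points and $s\ge\rho_X-3\ge 4$, one gets $9\ge\rho_F\ge 1+4d$, so $d\le 2$; and $d=2$ forces $\rho_F=9$, whence $1=K_F^2=2K_S^2$, impossible because $S$ inherits factoriality from $X$ (so $K_S^2\in\mathbb{Z}$). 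Thus $d=1$, $\pi$ is an isomorphism, and $X\cong S\times\mathbb{P}^1$ cannot have isolated non-empty singular locus. Both the construction of $\pi$ and this numerical argument --- in particular the two places where factoriality of $X$ is actually used, namely $E_i$ Cohen--Macaulay and $K_S^2$ integral --- are absent from your proposal, so as it stands it does not constitute a proof.
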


In the first section of this paper, we recall some preliminary results from \cite{gloria}; the second section contains the proof of Theorem \ref{3fac} and an observation concerning the case $\rho_X=6$.
\vspace{.3cm}

\noindent\textbf{Notation and terminology}

\medskip
We work over the field of complex number.

Let $X$ be a normal variety. We call $X$ \emph{Fano} if $-K_X$ has a multiple which is an ample Cartier divisor. We denote by $X_{reg}$ the non-singular locus of $X$. We say that $X$ is \emph{$\mathbb{Q}$-factorial} if every Weil divisor is $\mathbb{Q}$-Cartier, \emph{i.e.} it admits a multiple which is Cartier. We call $X$ \emph{factorial} if all its local rings are UFD; by \cite[II, Proposition 6.11]{H}, this implies that every Weil divisor of $X$ is Cartier. We refer the reader to \cite{km} for the definition and properties of terminal and canonical singularities. If $X$ has canonical singularities, it is called \emph{Gorenstein} if its canonical divisor $K_X$ is a Cartier divisor.

We denote with $\N(X)$ (resp. $\mathcal{N}^1(X)$) the vector space of one-cycles (resp. $\mathbb{Q}$-Cartier divisors) with real coefficients, modulo the relation of numerical equivalence. The dimension of these two real vector spaces is, by definition, the \emph{Picard number of $X$}, and is denoted by $\rho_X$. We denote by $[C]$ (resp. $[D]$) the numerical equivalence class of a one-cycle (resp. a $\mathbb{Q}$-Cartier divisor). 

Given $[D]\in \mathcal{N}^1(X)$, we set $D^{\perp}:=\{\gamma \in \N(X) | D \cdot \gamma =0\}$, where $\cdot$ denotes the intersection product. We define $\NE(X) \subset \N(X)$ as the convex cone generated by classes of effective curves and $\overline{\NE}(X)$ is its closure. An \textit{extremal ray} $R$ of $X$ is a one-dimensional face of $\overline{\NE}(X)$. We denote by $\Locus(R)$ the subset of $X$ given by the union of curves whose class belongs to $R$.

A \textit{contraction} of $X$ is a projective surjective morphism with connected fibers $\varphi:X \to Y$ onto a projective normal variety $Y$. It induces a linear map $\varphi_*:\N(X) \to \N(Y)$ given by the push-forward of one-cycles. We set $\NE(\varphi):=\overline{\NE}(X) \cap \ker(\varphi_*)$. We say that $\varphi$ is $K_X$-negative if $K_X \cdot \gamma <0$ for every $\gamma \in \NE(\varphi)$.

The \textit{exceptional locus} of $\varphi$ is the locus where $\varphi$ is not an isomorphism; we denote it by $\Exc(\varphi)$. We say that $\varphi$ is \textit{of fiber type} if $\dim(X) > \dim(Y)$, otherwise $\varphi$ is birational. We say that $\varphi$ is \textit{elementary} if $\dim(\ker(\varphi_*))=1$. In this case $\NE(\varphi)$ is an extremal ray of $\overline{NE}(X)$; we say that $\varphi$ (or $\NE(\varphi)$) is \textit{divisorial} if $\Exc(\varphi)$ is a prime divisor of $X$ and it is \textit{small} if its codimension is greater than $1$.

An elementary contraction from a $3$-fold $X$ is called \emph{of type $(2,1)$} if $\varphi$ is $K_X$-negative, birational, $\dim(\Exc(\varphi))=2$ and $\dim(\varphi(\Exc(\varphi)))=1$.

If $D \subset X$ is a Weil divisor and $i:D \to X$ is the inclusion map, we set $\N(D,X):=i_*\N(D) \subseteq \N(X)$.

\section{Preliminaries}

In the following statement, we collect some results from \cite{gloria}. For the reader's convenience, we recall here the main steps of their proof. We refer the reader to \cite[Theorem 2.2]{gloria} for the properties of contractions of type $(2,1)$ defined on mildly singular $3$-folds. 

\begin{lem}\cite[Theorem 1.2 and its proof - Remark 5.2]{gloria}\label{lemDN}
Let $X$ be a $\mathbb{Q}$-factorial Gorenstein Fano $3$-fold with isolated canonical singularities. Suppose $\rho_X \geq 6$. Then there exist morphisms
\[
\psi:X \to \mathbb{P}^1 \ \ \ \textmd{and} \ \ \ \xi: X\to S,
\]
where $S$ is a normal surface with $\rho_S=\rho_X -1$, and the morphism
\[
\pi:=(\xi,\psi):X \to S \times \mathbb{P}^1
\]
is finite.

Moreover there exist extremal rays $R_0,\ldots,R_m$ ($m \geq 3$) in $\NE(X)$ such that:
\begin{itemize}
\item each $R_i$ is of type $(2,1)$;
\item $\NE(\psi)=R_0 + \cdots + R_m$;
\item for $i=0,\ldots, m$, set $E_i=\Locus R_i$ and $Q=\NE(\xi)$. Then
\[
\psi(E_i)=\mathbb{P}^1, \ \ \  \N(E_i,X)=\mathbb{R}R_i \oplus \mathbb{R}Q\ \ \ \textmd{and} \ \ \ Q \subseteq \bigcap_{i=0}^m E_i^{\perp};
\]
\item $\psi$ factors as $X \stackrel{\sigma}{\to} \tilde{X} \to \mathbb{P}^1$, where $\sigma$ is birational, $\tilde{X}$ is a Fano $3$-fold with canonical isolated singularities, $\NE(\sigma)=R_1 + \cdots + R_s$, with $m \geq s\in\{\rho_X-2,\rho_X-3\}$ and $\sigma(E_1),\ldots,\sigma(E_s)\subset \tilde{X}$ are pairwise disjoint.
\end{itemize}
\end{lem}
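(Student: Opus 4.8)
The plan is to reconstruct Casagrande's construction from \cite{cas12} in the form developed in \cite{gloria}, using throughout that a Fano variety is a Mori dream space by \cite{bchm}: thus $\overline{\NE}(X)$ is rational polyhedral, all of it is $K_X$-negative so every face corresponds to a contraction, and the relative MMP over any base is available and terminates. Since $X$ is Fano with canonical singularities it is rationally connected, which will force the base of $\psi$ to be $\mathbb{P}^1$. The two morphisms $\psi$ and $\xi$ are produced as contractions of two complementary objects: a distinguished face $\NE(\psi)$ generated by type $(2,1)$ rays, and a single ray $Q$ transverse to it; the finiteness of $\pi$ and all the numerical relations then express that $X$ is close to the product $S \times \mathbb{P}^1$.

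First I would produce the fibration $\psi: X \to \mathbb{P}^1$. The core is to exhibit a face $\NE(\psi)$ of $\overline{\NE}(X)$ whose associated contraction is of fiber type onto a curve; rational connectedness forces the base to be $\mathbb{P}^1$. I would then analyze the extremal generators $R_0, \ldots, R_m$ of this face and, using the structure theorem for $(2,1)$-contractions on these mildly singular $3$-folds (\cite[Theorem 2.2]{gloria}), show that each $R_i$ is of type $(2,1)$, that $m \geq 3$, that $\N(E_i, X)$ is $2$-dimensional, and that each $E_i = \Locus(R_i)$ is ruled with the $R_i$-ruling lying in the $\psi$-fibers and dominating $\mathbb{P}^1$, so that $\psi(E_i) = \mathbb{P}^1$.

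Next I would construct $\xi$, establish the transversality, and produce the factorization. The ray $Q = \NE(\xi)$ is the ruling of the $E_i$ that is not contracted by $\psi$; its elementary fiber type contraction $\xi: X \to S$ lands on a normal surface, so $\rho_S = \rho_X - 1$. Each $E_i$ carries exactly the two ruling classes, giving $\N(E_i, X) = \mathbb{R}R_i \oplus \mathbb{R}Q$, and is swept out by $\xi$-fibers over a curve in $S$, whence $E_i \cdot Q = 0$ and $Q \subseteq \bigcap_{i=0}^m E_i^{\perp}$. Finiteness of $\pi = (\xi, \psi)$ then reduces to transversality: $Q \not\subseteq \NE(\psi)$ (so $\xi$-fibers are not $\psi$-contracted) and $\xi$ contracts no curve of a general $\psi$-fiber $F$ (the only $\psi$-contracted classes are the $R_i \neq Q$); since $\pi$ is proper and quasi-finite, it is finite. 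Finally, running the MMP for $\psi$ relatively over $\mathbb{P}^1$ contracts the divisorial rays $R_1, \ldots, R_s$, yielding $\sigma: X \to \tilde{X}$ with $\NE(\sigma) = R_1 + \cdots + R_s$ and a fiber type map $\tilde{X} \to \mathbb{P}^1$; the value $s \in \{\rho_X - 2, \rho_X - 3\}$ is forced by $\rho_{\tilde{X}} = \rho_X - s$ together with $\rho_{\tilde{X}} \in \{2,3\}$, and the images $\sigma(E_1), \ldots, \sigma(E_s)$ are pairwise disjoint because $\N(E_j, X) = \mathbb{R}R_j \oplus \mathbb{R}Q$ and $E_i \cdot Q = 0$ force $E_i \cdot C = 0$ for every curve $C \subset E_j$ with $i \neq j$.

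The hard part is two-fold. The main geometric obstacle is to control singularities along the construction: one must verify that $\tilde{X}$ stays Fano with isolated canonical singularities after contracting the $\sigma(E_i)$, which is exactly where the fine structure of type $(2,1)$ contractions (\cite[Theorem 2.2]{gloria}), together with the isolatedness and Gorenstein hypotheses, is essential, since in general such a contraction preserves neither the Fano property nor the class of singularities. The second, combinatorial, obstacle is to show that the whole face $\NE(\psi)$ is generated by type $(2,1)$ rays and that a single ruling $Q$ transverse to it and common to all the $E_i$ exists — equivalently, that each $E_i$ carries precisely the two classes $R_i$ and $Q$; this rigidity encodes the \emph{near-product} structure of $X$ and is what ultimately yields the Picard-number bounds.
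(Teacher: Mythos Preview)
Your proposal reverses the logical order of the construction and thereby skips its heart. In the paper (following Casagrande's method), one does \emph{not} start by exhibiting the face $\NE(\psi)$; rather, one first finds a single extremal ray $R_0$ of type $(2,1)$ with $\dim\N(E_0,X)=2$ (this is \cite[Proposition~3.5]{gloria}), and then runs a specific Mori program designed so that at each step $(E_0)_i \cdot Q_i > 0$. Tracking the indices at which $\codim\N((E_0)_i,X_i)$ drops is what produces the divisors $E_1,\ldots,E_s$ (pairwise disjoint already in $X$) and the value $s\in\{\rho_X-2,\rho_X-3\}$ (\cite[Lemma~3.3]{gloria}); the contraction $\sigma$ is then built as the contraction of the nef divisor $-K_X+E_1+\cdots+E_s$, and $\psi$ arises only afterwards as $\varphi\circ\sigma$ for a further fiber-type contraction $\varphi:\tilde X\to Y\cong\mathbb{P}^1$. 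Your step ``exhibit a face of $\overline{\NE}(X)$ whose associated contraction is of fiber type onto a curve'' is precisely the \emph{output} of this whole machine, not an input you may take for granted.

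Two further points confirm the inversion. Your justification of $s\in\{\rho_X-2,\rho_X-3\}$ via ``$\rho_{\tilde X}\in\{2,3\}$'' is circular: nothing in your sketch bounds $\rho_{\tilde X}$; in the paper this comes from counting how many times the codimension of $\N((E_0)_i,X_i)$ can drop along the MMP. And the construction of $\xi$ is not simply ``the other ruling of the $E_i$'': one must produce a nef divisor (a specific combination of $E_0,\ldots,E_m$) whose associated contraction is elementary and whose ray $Q$ lies in $\N(E_i,X)$ for \emph{every} $i$ simultaneously. That a single such common ray exists is again a consequence of the MMP analysis, not something visible from the fibration $\psi$ alone.
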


\begin{proof}
By \cite[Remark 5.2]{gloria}, the assumption $\rho_X \geq 6$ implies that all the assumptions of \cite[Theorem 1.2]{gloria} are satisfied, from which the existence of the finite morphism $\pi$. The properties of its projections $\psi$ and $\xi$ follow by their construction, that we briefly recall. All the details can be found in the proof of \cite[Theorem 1.2]{gloria}.

By \cite[Proposition 3.5]{gloria}, there exists an extremal ray $R_0 \subset \NE(X)$ of type $(2,1)$. Set $E_0=\Locus(R_0)$; we have $\dim\N(E_0,X)=2$. As in \cite[Lemma 3.1]{gloria}, we may find a Mori program
\begin{equation}\label{mp}
X=X_0 \stackrel{\sigma_0}{\dashrightarrow} X_1 \dashrightarrow \cdots \dashrightarrow X_{k-1} \stackrel{\sigma_{k-1}}{\dashrightarrow}X_k \xrightarrow{\varphi} Y
\end{equation}
where $X_1,\ldots,X_k$ are $\mathbb{Q}$-factorial $3$-folds with canonical singularities and, for each $i=0,\ldots,k-1$, there esists a $K_{X_i}$-negative extremal ray $Q_i \subset \NE(X_i)$ such that $\sigma_i$ is either its contraction, if $Q_i$ is divisorial, or its flip, if it is small. Moreover, if $(E_0)_i \subset X_i$ is the transform of $E_0$ and $(E_0)_0:=E_0$, then $(E_0)_i \cdot Q_i>0$. Finally, $\varphi$ is a fiber type contraction to a $\mathbb{Q}$-factorial normal variety $Y$.

Let us set
\[
\{i_1,\ldots,i_s\}:=\{i \in \{0,\ldots,k-1\} | \codim \N(D_{i+1},X_{i+1})=\codim \N(D_i,X_i)-1\}.
\]
Then, by \cite[Lemma 3.3]{gloria}, $s \in \{\rho_X-2,\rho_X-3\}$ (in particular $s \geq 3$); moreover, for every $j \in \{1,\ldots,s\}$, $Q_{i_j}$ is a divisorial ray, $\sigma_{i_j}$ is a birational contraction of type $(2,1)$ and, if $E_j \subset X$ is the transform of the exceptional divisors of the contraction $\sigma_{i_j}$ as above, then $E_1,\ldots,E_s$ are pairwise disjoint.

Since $s \geq 3$, \cite[Proposition 3.5]{gloria} assures that, for each $j=1,\ldots,s$, there exists an extremal ray $R_j \subset \NE(X)$ of type $(2,1)$ such that $E_j=\Locus(R_j)$. The divisor $-K_X +E_1+\cdots+E_s$ comes out to be nef, and its associated contraction $\sigma:X \to \tilde{X}$ verifies
\[
\ker(\sigma_*)=\mathbb{R}R_1+\cdots+\mathbb{R}R_s \ \ \ \textmd{and} \ \ \ \Exc(\sigma)=E_1 \cup \cdots \cup E_s.
\]
It is thus possible to look at $\sigma$ a part of a Mori program as in \eqref{mp}, and to find a fiber type contraction $\varphi:\tilde{X}\to Y$ giving rise to a morphism $\psi:= \varphi \circ \sigma: X\to Y$ as in the statement. In particular, we have $\NE(\psi)=R_0 + \cdots + R_m$, where $m \geq s$ and $R_{s+1},\ldots,R_m$ are extremal rays of type $(2,1)$. We notice that, since $\dim(X)=3$, we have $Y \cong \mathbb{P}^1$ by \cite[Remark 4.2]{gloria}.

The second projection $\xi$ arises as the contraction associated to a certain nef divisor defined as a combination of the prime divisors $E_0, \ldots, E_m$ constructed above (recall that $E_i=\Locus R_i$ for $i=0,\ldots,m$). It is an elementary contraction and the one-dimensional subspace generated by $\NE(\xi)$ belongs to $\N(E_i,X)$ for every $i=0,\ldots,m$.
\end{proof}

\section{Theorem \ref{3fac}}

\begin{proof}[Proof of Theorem \ref{3fac}]
Let us prove that, if $\rho_X \geq 7$, then the morphism $\pi:X \to S \times \mathbb{P}^1$ given by Lemma \ref{lemDN} is an isomorphism. This will give a contradiction with our assumptions on the singularities of $X$, since $S \times \mathbb{P}^1$ is smooth or has one-dimensional singular locus.

We are in the setting of Lemma \ref{lemDN}; let us keep its notations. By \cite[Corollary 1.9 and Theorem 4.1(2)]{AW}, the general fiber of $\xi$ is a smooth rational curve, and the other fibers have at most two irreducible components (that might coincide) whose whose reduced structures are isomorphic to $\mathbb{P}^1$.

Our assumptions imply that $S$ is factorial: if $C \subset S$ is a Weil divisor, its counterimage $D:=\xi^{-1}(C) \subset X$ is a Cartier divisor, because $X$ is factorial. Moreover $D \cdot Q=0$ (where $Q=\NE(\xi)$), because $D$ is disjoint from the general fiber of $\xi$. Then $D=\xi^*(C')$ for a certain Cartier divisor $C'$ on $S$. But then $C = C'$ is Cartier. 

Fix $i=0,\ldots,m$; let $\varphi_i:X \to Y_i$ be the contraction of $R_i$ and set $G_i:=\varphi_i(E_i) \subseteq Y_i$, $T_i:=\xi(E_i)\subseteq S$:
\[
\xymatrix{
& E_i \ar[dl]_{\varphi_{i|E_i}} \ar[dr]^{\xi_{|E_i}}\\
G_i && T_i.
}
\]
Notice that $T_i \subset S$ is a curve. Indeed, by Lemma \ref{lemDN}, $E_i \cdot Q=0$, which implies that $T_i \subset S$ is a (Cartier) divisor and $E_i=\xi^*(T_i)$.

Let $f_i$ be the general fiber of $\varphi_i$. Since $f_i$ is a smooth rational curve which dominates $T_i$, $T_i$ is a (possibly singular) rational curve. The same conclusion holds for $G_i$, which is dominated by any smooth curve contained in a fiber of $\xi$ over $T_i$.

We have
\[
-1=E_i \cdot f_i = \xi^*(T_i) \cdot f_i = T_i^2 \cdot \deg(\xi_{|f_i}),
\]
from which $-T_i^2=\deg(\xi_{|f_i})=1$. Then the general fiber $g$ of $\xi$ over $T_i$ is a smooth rational curve. Indeed, $g$ has no embedded points, and if, by contradiction, the $1$-cycle associated to $g$ is of the type $C_1 + C_2$, then $g$ would intersect $f_i$ in at least two (distinct or coincident) points. This is impossible because $g$ is general and $\deg(\xi_{|f_i})=1$.

Then $E_i$ is smooth along the general fibers of both $\varphi_i$ and $\xi$; we deduce that $E_i$ is smooth in codimension one. Moreover $E_i$ is a Cohen-Macaulay variety, because $X$ is factorial. Then, by Serre's criterion, $E_i$ is normal. Then the finite morphism $(\xi_{|E_i},\varphi_{i|E_i}):E_i \to T_i \times G_i$, which has degree one, factors through the normalization of the target: there is a commutative diagram
\[
\xymatrix{
E_i \ar[dr] \ar[r]^{\hspace{-.3cm}\tau} & \mathbb{P}^1 \times \mathbb{P}^1 \ar[d]^{\nu} \\
& T_i \times G_i. \\
}
\]
Since $\tau$ is finite of degree one, by Zariski Main Theorem, it is an isomorphism. Thus $E_i \cong \mathbb{P}^1 \times \mathbb{P}^1$, and $\xi_{|E_i}:E_i \to T_i \cong \mathbb{P}^1$ and $\varphi_{i|E_i}:E_i \to G_i \cong \mathbb{P}^1$ are the projections. In particular, since both $E_i$ and $T_i$ are Cartier divisors, they are contained in the smooth loci of, respectively, $X$ and $S$.

We have
\[
(K_X - \xi^*(K_S))\cdot f_i = (K_{E_i} - \xi_{|E_i}^*(K_{T_i})) \cdot f_i = (\varphi_{i|E_i}^*(K_{G_i}))\cdot f_i =  0.
\]

Let $F$ be a general fiber of $\psi:X \to \mathbb{P}^1$. Then $F$ is a smooth Del Pezzo surface and, by Lemma \ref{lemDN}, $\N(F) \subseteq \sum \mathbb{R}[f_i]$; thus $K_X - \xi^*(K_S)$ is numerically trivial in $F$. Moreover $\zeta:=\xi_{|F}:F \to S$ is a finite morphism of degree $d:=\deg(\pi)$ and
\begin{equation}\label{F-S}
K_F=(K_X)_{|F}=(\xi^*(K_S))_{|F}=\zeta^*K_S;
\end{equation}
in particular $\zeta$ is unramified in the open subset $\xi^{-1}(S_{\reg})$, which contains $E_i \cap F$ for every $i=0,\ldots,m$.

Set $\tilde{F}:=\sigma(F) \subset \tilde{X}$, where $\sigma:X \to \tilde{X}$ is the birational contraction given by Lemma \ref{lemDN}; then $\tilde{F}$ is again a smooth Del Pezzo surface and $\sigma_{|F}:F \to \tilde{F}$ is a contraction. For every $i=1,\ldots,s$, the intersection $E_i \cap F$ is the union of $d$ disjoint curves numerically equivalent to $f_i$; in particular $\sigma_{|F}$ realizes $F$ as the blow-up of $\tilde{F}$ along $s\cdot d$ distinct points (where $s=\rho_X - \rho_{\tilde{X}}$). Then, recalling that $s \geq \rho_X-3$ and $\rho_X \geq 7$, we get
\[
9 \geq \rho_F=\rho_{\tilde{F}} + s \cdot d \geq 1 + 4 d,
\]
and then $d \leq 2$. Moreover, if $d=2$, then $\rho_F=9$ and, by \ref{F-S},
\[
1=K_{F}^2=\zeta^*(K_S) \cdot K_F =2 (K_S)^2,
\]
which is impossible because $S$ is factorial and thus $K_S^2$ is integral. Hence $d=\deg(\zeta)=\deg(\pi)=1$ and the statement is proved.
\end{proof}

The case $\rho_X=6$ is more complicated to analyze. Indeed, though Lemma \ref{lemDN} still holds in that case, we are not able to conclude that $\pi$ is an isomorphism and that, as a consequence, $X$ is smooth.

\begin{pro}
Let $X$ be a factorial Fano $3$-fold with isolated canonical singularities and with $\rho_X=6$. If $X$ is not smooth, there exists a finite morphism of degree $2$
\[
 \pi:X \to S \times \mathbb{P}^1,
\]
where $S$ is a singular Del Pezzo surface with factorial canonical singularities, $\rho_S=5$, $(K_S)^2=1$. Moreover the ramification locus of $\pi$ contains a surface $R$ which dominates $S$. 
\end{pro}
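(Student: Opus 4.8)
The plan is to repeat the proof of Theorem~\ref{3fac}, which uses $\rho_X\ge 7$ only in its last three lines, then to pin down $\deg\pi$ and the invariants of $S$, and finally to identify the ramification divisor of $\pi$ by computing $K_X$ in terms of $\xi^{*}K_S$ and the fibre class of $\psi$. Concretely, I would first apply Lemma~\ref{lemDN} (legitimate, since $\rho_X=6\ge 6$) and copy the proof of Theorem~\ref{3fac} up to the relation $\rho_F=\rho_{\tilde F}+s\cdot d$. This already gives: $S$ is factorial with $\rho_S=5$; each $T_i:=\xi(E_i)$ has $T_i^{2}=-1$ and lies in $S_{\reg}$, while each $E_i\cong\mathbb P^1\times\mathbb P^1$ lies in $X_{\reg}$; for a general fibre $F$ of $\psi$ (a smooth del Pezzo surface), $\zeta:=\xi_{|F}\colon F\to S$ is finite of degree $d:=\deg\pi$ with $K_F=\zeta^{*}K_S$ by \eqref{F-S}; and $\sigma_{|F}\colon F\to\tilde F$ presents $F$ as the blow-up of the smooth del Pezzo surface $\tilde F$ at $s\cdot d$ points, so $\rho_F=\rho_{\tilde F}+s\cdot d$ with $\rho_{\tilde F}\ge 1$ and $s\in\{3,4\}$. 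I would also record that $\N(F,X)=\ker\psi_{*}$, since $\ker\psi_{*}$ is spanned by the rays $R_i=\mathbb R_{\ge 0}[f_i]$ and each $[f_i]$ lies in $\N(F,X)$.

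Next I would fix $d$. If $d=1$, then $\pi$ is a finite birational morphism onto the normal variety $S\times\mathbb P^1$, hence an isomorphism; but then $\operatorname{Sing}(X)=\operatorname{Sing}(S)\times\mathbb P^1$ would be empty or a curve, contradicting that $X$ has isolated singularities and is not smooth. So $d\ge 2$, and $\rho_F=\rho_{\tilde F}+s\cdot d\le 9$ with $s\ge 3$ forces $d=2$. By the projection formula $K_F^{2}=2K_S^{2}$, so $\rho_F=10-K_F^{2}=10-2K_S^{2}$ is even; this excludes $s=4$ (which would force $\rho_F=9$), so $s=3$, $\rho_F=8$, $K_S^{2}=1$ and $\rho_{\tilde F}=2$. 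Since $-K_F=\zeta^{*}(-K_S)$ is ample and $\zeta$ is finite, $-K_S$ is ample; as $S$ is factorial, hence Gorenstein, it is a Gorenstein del Pezzo surface with $K_S^{2}=1$ and so has only Du Val, hence canonical, singularities. Finally $S$ is singular, since a smooth del Pezzo surface with $K^{2}=1$ has Picard number $9\ne 5$.

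The heart of the matter is the ramification of $\pi=(\xi,\psi)$. Since $\psi$ is elementary, $(\ker\psi_{*})^{\perp}$ is the line in $\mathcal N^{1}(X)$ spanned by the fibre class $F$; and $K_X-\xi^{*}K_S$, being numerically trivial on $F$, lies in $(\ker\psi_{*})^{\perp}$ (via the projection formula and $\N(F,X)=\ker\psi_{*}$), so $K_X-\xi^{*}K_S=\lambda F$ for some $\lambda$. Intersecting with a general fibre $C$ of $\xi$ — a smooth conic, so $-K_X\cdot C=2$, while $\xi^{*}K_S\cdot C=0$ and $F\cdot C=d=2$ is the number of points of a general fibre of $\pi$ — gives $\lambda=-1$, i.e.\ $K_X=\xi^{*}K_S-F$; this numerical identity of Cartier classes lifts to $\operatorname{Pic}(X)$ because on a Fano variety numerical and linear equivalence coincide. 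Over the smooth locus $S_{\reg}\times\mathbb P^1$ the finite morphism $\pi$ is flat ($X$ being Cohen--Macaulay, as it has canonical singularities), hence a double cover, with $K_X=\pi^{*}(K_{S\times\mathbb P^1}+L)$ for the associated line bundle $L$; comparing with $K_{S\times\mathbb P^1}=p_S^{*}K_S-2\,p_{\mathbb P^1}^{*}(\mathrm{pt})$ and $K_X=\xi^{*}K_S-F=\pi^{*}p_S^{*}K_S-F$ forces $L=p_{\mathbb P^1}^{*}(\mathrm{pt})$. Hence the branch divisor of $\pi$ lies in $|\,2\,p_{\mathbb P^1}^{*}(\mathrm{pt})\,|$ and, being reduced ($X$ is normal), equals $S\times\{t_1\}+S\times\{t_2\}$ with $t_1\ne t_2$. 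Then $R:=\psi^{-1}(t_1)_{\red}=\pi^{-1}(S\times\{t_1\})_{\red}$ is a prime divisor contained in the ramification locus of $\pi$ which, over $S_{\reg}$, maps isomorphically onto $S\times\{t_1\}\cong S$, hence dominates $S$; in fact $\psi^{-1}(t_1)=2R$ as a scheme. This yields all the assertions of the proposition.

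I expect the main obstacle to be this last step: one has to be certain that $K_X-\xi^{*}K_S$ is a \emph{Cartier} multiple of the fibre class $F$ and that the multiple is exactly $-1$, which rests on the two structural facts $\N(F,X)=\ker\psi_{*}$ and ``$\xi$ is an elementary conic bundle with $-K_X\cdot C=2$ and $F\cdot C=d$ on a general fibre $C$'', both inherited from Lemma~\ref{lemDN} and \cite{AW}. Once $K_X=\xi^{*}K_S-F$ is in hand, the double-cover description of $\pi$ over $S_{\reg}\times\mathbb P^1$ and the computation of its branch locus are routine.
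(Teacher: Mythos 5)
Your proposal is correct in substance and reproduces the paper's argument for everything up to and including $d=2$, $s=3$, $\rho_F=8$, $\rho_S=5$ and $K_S^2=1$ (same chain: $d=1$ would make $\pi$ an isomorphism onto $S\times\mathbb{P}^1$, contradicting the isolated non-empty singular locus; then $9\ge\rho_F=\rho_{\tilde F}+sd$ and the parity of $\rho_F=10-2K_S^2$). Where you genuinely diverge is the ramification statement: the paper disposes of it in three lines by restricting $\pi$ to a general fibre $C$ of $\xi$, noting $\psi_{|C}:\mathbb{P}^1\to\mathbb{P}^1$ has degree $2$, and applying Hurwitz to get $R\cdot C=2>0$, whence $R$ meets the general fibre of $\xi$ and dominates $S$. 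Your route instead pins down $K_X=\xi^*K_S-F$ and unwinds the double-cover structure of $\pi$ over $S_{\reg}\times\mathbb{P}^1$ to show the branch divisor is $S\times\{t_1\}+S\times\{t_2\}$; this is heavier (it needs numerical-to-linear lifting, flatness, and the cyclic-cover formalism) but yields strictly more information, namely that the ramification surface is $\psi^{-1}(t_i)_{\red}$. Two justifications in your write-up are wrong as stated, though the conclusions survive: $\psi$ is \emph{not} elementary ($\dim\ker\psi_*=\rho_X-1=5$); the correct reason that $(\ker\psi_*)^{\perp}$ is the line $\mathbb{R}[F]$ is simply that $\ker\psi_*$ has codimension one in $\N(X)$. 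And it is false that a Gorenstein del Pezzo surface with $K^2=1$ automatically has Du Val singularities (cones over elliptic curves are Gorenstein del Pezzo with non-rational singularities); here the canonicity of $S$ should instead be read off from $K_F=\zeta^*K_S$ with $F$ smooth and $\zeta$ finite of degree $2$, which exhibits $S$ as a quotient of $F$ by an involution acting freely in codimension one, so $S$ has at worst $A_1$ points. With those two repairs your argument is complete.
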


\begin{proof}
 We argue as in the proof of Theorem \ref{3fac} and we use the same notations. Since $X$ is not smooth, the degree of $\pi$ must be $2$. Exactly as in the above case, we have
\begin{equation}\label{miao}
 K_F=(K_X)_{|F}=(\xi^*(K_S))_{|F}=(\xi^*K_S)_{|F} =\zeta^*K_S,
\end{equation}
and
\begin{equation}\label{pari}
 \rho_F= 10 - (K_F)^2 = 10 - 2(K_S)^2,
\end{equation}
so that $\rho_F$ needs to be even. Since $\rho_X=6$, we have $s \in \{3,4\}$, and then
\[
9 \geq \rho_F=\rho_{\tilde{F}} + 2s.
\]
Thus the only possibility is that $\rho_{\tilde{F}}=2$ and $\rho_F=8$. By \eqref{pari}, we get $(K_S)^2=1$.

Let us call $R$ the ramification divisor (possibly trivial) of $\pi$. Let $C$ be the general fiber of $\xi$. Then $C \cong \mathbb{P}^1$ and $\psi_{|C}:\mathbb{P}^1 \to \mathbb{P}^1$ is finite of degree $2$. By Hurwitz's formula we have $R \cdot C=2$, and hence $R$ is not trivial and it dominates $S$.
\end{proof}

\vspace{.3cm}

\noindent\textit{Acknowledgments. }This paper is part of my PhD thesis; I am deeply grateful to my advisor Cinzia Casagrande for her constant guidance.

\medskip

\providecommand{\bysame}{\leavevmode\hbox to3em{\hrulefill}\thinspace}
\providecommand{\MR}{\relax\ifhmode\unskip\space\fi MR }
\providecommand{\MRhref}[2]{%
  \href{http://www.ams.org/mathscinet-getitem?mr=#1}{#2}
}
\providecommand{\href}[2]{#2}


\end{document}